\DeclareMathOperator\tss{Min-TSS}
\DeclareMathOperator\rank{rank}
\newcommand{\disttonh}[1]{\operatorname{dist^{nh}_\mathnormal{#1}}}
\newcommand{\disttorec}[1]{\operatorname{dist^r_\mathnormal{#1}}}
\DeclareMathOperator\ts{ts}
\newtcolorbox{probbox}{arc=6pt,
                      colback=white!100,
                      colframe=black!50,
                      before skip=6pt,
                      after skip=6pt,
                      boxsep=1pt,
                      left=6pt,
                      right=6pt,
                      top=4pt,
                      bottom=4pt}
\newcommand{\searchprob}[3]{
   \begin{center}%
    \begin{minipage}{\linewidth}%
      \begin{probbox}
      \textsc{#1}\\[0.2ex]
      \textbf{Input:} #2\\[0.2ex]
      \textbf{Goal:} #3
      \end{probbox}
    \end{minipage}%
  \end{center}
}
\theoremstyle{plain}
\newtheorem{thm}{Theorem}
\newtheorem{cl}[thm]{Claim}
\newtheorem{prop}[thm]{Proposition}
\theoremstyle{definition}
\newtheorem{rem}[thm]{Remark}
\title{On approximating the rank of graph divisors}
\author{
Kristóf Bérczi\thanks{MTA-ELTE Momentum Matroid Optimization Research Group and MTA-ELTE Egerváry Research Group, Department of Operations Research, Eötvös Loránd University, Budapest, Hungary. Email: \texttt{kristof.berczi@ttk.elte.hu}.}
\and
Hung P. Hoang\thanks{Institute of Theoretical Computer Science, Department of Computer Science, ETH Zürich, Switzerland. Email: \texttt{hung.hoang@inf.ethz.ch}}
\and
Lilla Tóthmérész\thanks{MTA-ELTE Egerváry Research Group, Department of Operations Research, Eötvös Loránd University, Budapest, Hungary. Email: \texttt{lilla.tothmeresz@ttk.elte.hu}.}
}
\date{}
\begin{document}

\maketitle

\begin{abstract}
Baker and Norine initiated the study of graph divisors as a graph-theoretic analogue of the Riemann-Roch theory for Riemann surfaces. One of the key concepts of graph divisor theory is the {\it rank} of a divisor on a graph. The importance of the rank is well illustrated by Baker's {\it Specialization lemma}, stating that the dimension of a linear system can only go up under specialization from curves to graphs, leading to a fruitful interaction between divisors on graphs and curves. 

Due to its decisive role, determining the rank is a central problem in graph divisor theory. Kiss and T\'othm\'er\'esz reformulated the problem using chip-firing games, and showed that computing the rank of a divisor on a graph is NP-hard via reduction from the Minimum Feedback Arc Set problem. 

In this paper, we strengthen their result by establishing a connection between chip-firing games and the Minimum Target Set Selection problem. As a corollary, we show that the rank is difficult to approximate to within a factor of $O(2^{\log^{1-\varepsilon}n})$ for any $\varepsilon > 0$ unless $P=NP$. Furthermore, assuming the Planted Dense Subgraph Conjecture, the rank is difficult to approximate to within a factor of $O(n^{1/4-\varepsilon})$ for any $\varepsilon>0$.

\medskip

\noindent \textbf{Keywords:} 
Approximation, Chip-firing, Graph divisors, Minimum target set selection, Riemann-Roch theory
\end{abstract}


\section{Introduction}
\label{sec:intro}

Motivated by the fact that a finite graph can be viewed as a discrete analogue of a Riemann surface, Baker and Norine~\cite{baker2007riemann} introduced a discrete analogue of the Riemann-Roch theory for graphs. They adapted the notions of a divisor, linear equivalence, and rank to the combinatorial setting, and studied the analogy between the continuous and the discrete cases in the context of linear equivalence. They further provided several results showing that the notion of rank plays an important role in various problems. In particular, a graph-theoretic analogue of the Riemann-Roch theorem holds for this notion of rank. Therefore, deciding the complexity of computing the rank is a natural question that is of both algebraic geometric and combinatorial interest. This problem is attributed to H. Lenstra, and was explicitly posed by many, see~\cite{baker2013chip,hladky2013rank,manjunath2011rank}. 

\paragraph{Previous work}

From the positive side, Luo~\cite{luo2011rank} introduced the notion of rank-determining sets of metric graphs, and verified the existence of finite rank-determining sets constructively. Hladk\'y, Kr\'al, and Norine \cite{hladky2013rank} confirmed a conjecture of Baker~\cite{baker2008specialization} relating the ranks of a divisor on a graph and on a tropical curve, and provided a purely combinatorial algorithm for computing the rank of a divisor on a tropical curve, which can be considered simply as a metric graph, see~\cite{mikhalkin2008tropical,gathmann2008riemann}. For multigraphs with a constant number of vertices, Manjunath~\cite{manjunath2011rank} gave a polynomial time algorithm that computes the rank of a divisor. Furthermore, by a corollary of the Riemann-Roch theorem, the rank can be computed in polynomial time for divisors of degree greater than $2g-2$, where $g$ denotes the cyclotomic number of the graph (also called the genus). Baker and Shokrieh~\cite{baker2013chip} provided an algorithm that can efficiently check whether the rank of a divisor on a graph is at least $c$ for any fixed constant $c$. Cori and le Borgne~\cite{cori2013riemann} provided a linear time algorithm for determining the rank of a divisor on a complete graph. 

There are also negative results regarding the complexity of computing the rank. Amini and Manjunath~\cite{amini2010riemann} proposed a general Riemann-Roch theory for sub-lattices of the root lattice $A_n$\footnote{$A_n$ is the lattice of $(n+1)$-dimensional integer vectors whose components sum to zero.}, thus giving a geometric interpretation of the work of Baker and Norine and generalising it to sub-lattices of $A_n$. They also showed that in the general model deciding whether the rank of a divisor is at least zero is already NP-hard. The complexity of computing the rank of a divisor on a graph was settled by Kiss and T\'othm\'er\'esz~\cite{divisor_rank_NP_hard} who showed that the problem is NP-hard even in simple graphs. 

\paragraph{Our results}

As computing the rank of a divisor is NP-hard in general, it is natural to ask whether it can be approximated within any reasonable factor. Our main contribution is establishing a connection between computing the rank and finding a so-called minimum target set in an undirected graph, a central problem in combinatorial optimization that is notoriously hard to approximate. A recent paper by B\'erczi, Boros, {\v{C}}epek, Ku{\v{c}}era, and Makino~\cite{berczi2022unique} showed a similar connection between finding a minimum key in a directed hypergraph and finding a minimum target set in an undirected graph, therefore, roughly speaking, our result places these three classical problems on the same level of complexity. As a corollary, we deduce strong lower bounds on the approximability of the rank. In particular, we show that the rank is difficult to approximate to within a factor of $O(2^{\log^{1-\varepsilon}n})$ for any $\varepsilon > 0$ unless $P=NP$. Furthermore, assuming the Planted Dense Subgraph Conjecture, we show that the rank is difficult to approximate to within a factor of $O(n^{1/4-\varepsilon})$ for any $\varepsilon>0$, thus providing the first polynomial hardness result for the problem. By \cite[Theorem 1.6]{luo2011rank}, our results also apply to computing the rank of a divisor on a tropical curve.
\medskip

The rest of the paper is organized as follows. Basic definitions and notation are introduced in Section~\ref{sec:prelim}, together with a brief introduction into graph divisor theory. The hardness of approximation of the rank of divisors is then discussed in Section~\ref{sec:main}.

\section{Preliminaries}
\label{sec:prelim}

\paragraph{Basic notation}

We denote the sets of \emph{integer} and \emph{nonnegative integer} numbers by $\mathbb{Z}$, and $\mathbb{Z}_+$, respectively. Let $V$ be a ground set and $X\subseteq V$ be a subset of $V$. 
For a function $f:V\to\mathbb{Z}$, we use $f(X)\coloneqq\sum_{v\in X}f(v)$. 

Let $G=(V,E)$ be a graph with \emph{vertex set} $V$ and \emph{edge set} $E$. Throughout the paper, we allow multiple edges between a pair of vertices. Given  $Z\subseteq V$ and $F\subseteq E$, the \emph{degree of $Z$ in $F$} is the number of edges in $F$ going between $Z$ and $V\setminus Z$ and is denoted by $d_F(Z)$. In particular, for a vertex $v\in V$, we denote by $d_F(v)$ the degree of $v$ in $F$. The \emph{degree vector} or \emph{degree sequence} of $G$ is then the vector $d_G\in\mathbb{Z}^V_+$ with $d_G(v)\coloneqq d_E(v)$. For two vertices $u,v\in V$, $d_G(v,u)$ refers to the \emph{number of edges connecting $v$ and~$u$} in~$G$.

\paragraph{Chip-firing}
We give a brief overview of the basic definitions of graph divisor theory due to Baker and Norine~\cite{baker2007riemann}. For a graph $G=(V,E)$, a \emph{divisor} is an assignment $f:V\to \mathbb{Z}$ of an integer number to each vertex. We usually refer to the value $f(v)$ as the \emph{number of chips} on $v$, even though $f(v)$ might be negative, and hence we also refer to $f$ as a \emph{chip configuration}. The \emph{degree} of a divisor $f$ is defined as $\deg(f)\coloneqq f(V)=\sum_{v\in V} f(v)$. The set of divisors admits an equivalence-relation, called \emph{linear equivalence}, which is defined based on the operation of \emph{firing a vertex}. For an arbitrary vertex $v$, the firing of $v$ modifies the divisor $f$ to $f'$ with 
\[f'(u)=\begin{cases} f(v)-d_G(v) & \text{if } u=v, \\
f(u)+d_G(v, u) & \text{if } u\neq v.
\end{cases}\]
That is, $v$ sends a chip along each edge incident to it, and as a result, the number of chips on $v$ decreases by the degree of $v$. Note that the degree of the divisor is not changed by the firing. 

Two divisors $f$ and $g$ are called \emph{linearly equivalent} if $f$ can be transformed into $g$ by a sequence of firings, denoted by $f\sim g$. Note that linear equivalence is indeed an equivalence relation, since a firing of $v$ can be `reversed' by firing every other vertex once. A divisor $f$ is called \emph{effective} if $f(v)\geq 0$ for each $v\in V$. A divisor is called \emph{winnable} if it is linearly equivalent to an effective divisor. A basic quantity associated to a divisor is its \emph{rank}, defined as \[\rank_G(f) \coloneqq \min\{\deg(g) \mid \text{ $g:V\to\mathbb{Z}$ is effective, $f-g$ is not winnable}\}-1.\]
In other words, because of the minus one in the definition, the rank of a divisor measures the maximum number of chips that can be removed arbitrarily from $f$ so that the resulting divisor is winnable.

We say that a vertex $v$ is \emph{active} with respect to a divisor $f$ if $f(v)\geq d_G(v)$. The \emph{chip-firing game} is a one player game, where a step consists of choosing an active vertex, if there exists any, and firing it\footnote{In some works, non-active vertices are also allowed to fire, resulting in a negative number of chips on them. In such a setting, a firing of a vertex $v$ is called \emph{legal} if $v$ is active, and games consisting of legal firings are called \emph{legal} as well. As we only consider legal games in this work, we dismiss the word `legal' throughout.}. The game is \emph{non-trivial} if at least one vertex gets fired. We say that the game \emph{halts} if we get to a divisor that has no active vertex; such a divisor is called \emph{stable}. The milestone result of Bj{\"o}rner, Lov{\'a}sz, and Shor~\cite{bjorner1991chip} states that starting from a given divisor, either every game halts and ends with the same stable divisor, or every game continues indefinitely. That is, the choices of the player do not influence whether the game halts or not, as it only depends on the starting divisor. Based on this theorem, we call a divisor $f$ \emph{halting} if the chip-firing game starting from $f$ eventually halts, and \emph{non-halting} otherwise. 

Recall that $d_G$ denotes the divisor that for any vertex $v$ has $d_G(v)$ chips. Let us denote by $\mathbf{1}$ the divisor that has 1 chip on every vertex. The winability of a divisor can be characterized by the chip-firing game the following way, see \cite[Corollary 5.4]{baker2007riemann}.

\begin{prop}[Baker and Norine]\label{prop:baker}
A divisor $f$ on a graph $G$ is winnable if and only if $d_G - \mathbf{1} - f$ is a halting divisor.
\end{prop}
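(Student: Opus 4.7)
Set $g := d_G - \mathbf{1} - f$ for convenience. The key observation is a duality between firing moves on $f$ and firing moves on $g$: if $f'$ is obtained from $f$ by firing a vertex $v$, then $g' := d_G - \mathbf{1} - f'$ is obtained from $g$ by the \emph{anti-firing} of $v$, which sends $d_G(v)$ chips onto $v$ and removes $d_G(v,u)$ chips from each $u \neq v$. Since firing every vertex exactly once is a no-op (the net change at every vertex is $-d_G(v) + \sum_{u\neq v} d_G(u,v) = 0$), anti-firing $v$ is the same as firing every vertex of $V\setminus\{v\}$ once. In particular, $f \sim f'$ holds if and only if $g \sim d_G - \mathbf{1} - f'$. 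Moreover, $f'(v) \geq 0$ for every $v\in V$ if and only if $(d_G-\mathbf{1}-f')(v) = d_G(v)-1-f'(v) \leq d_G(v)-1 < d_G(v)$ for every $v$, i.e.\ if and only if $d_G-\mathbf{1}-f'$ is stable. Combining these, $f$ is winnable exactly when $g$ admits a \emph{linearly equivalent stable divisor}.

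It remains to argue that the existence of a linearly equivalent stable divisor is the same as $g$ being halting. The ``only if'' direction is immediate: the stable divisor reached by a legal halting game is reached by firings and is therefore linearly equivalent to $g$. For the converse, suppose $g \sim g^*$ with $g^*$ stable. Writing $g^* = g + L\tau$ for some $\tau\in\mathbb{Z}^V$, where $L$ is the graph Laplacian, and using $L\mathbf{1}=0$, we may replace $\tau$ by $\tau+k\mathbf{1}$ for $k$ large enough to assume $\tau\geq 0$. Thus there is a nonnegative firing script that takes $g$ to the stable divisor $g^*$. The abelian property of chip-firing (the Björner--Lovász--Shor theorem cited above) then guarantees that any legal game from $g$ halts (and, in fact, halts at $g^*$).

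The one subtlety to be careful with is the final invocation of Björner--Lovász--Shor: the paper states it in the form ``all legal games have the same halting behavior'', whereas what we really need is the slightly stronger abelian statement that if \emph{some} nonnegative (not necessarily legal) script stabilizes the configuration, then the legal game also halts. This is standard and follows from the same confluence/abelian-property arguments, so the only work is to note why it applies; the rest of the argument is the bookkeeping duality between $f$ and $d_G-\mathbf{1}-f$ described above.
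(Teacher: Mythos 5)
Your argument is correct, but note that the paper itself does not prove this proposition: it is imported verbatim from Baker and Norine (their Corollary 5.4), with only a remark citing \cite{divisor_rank_NP_hard} for why the nonnegativity hypothesis on $d_G-\mathbf{1}-f$ can be dropped. So there is no in-paper proof to match; what you have written is a self-contained substitute. The two halves of your duality are right: firing $v$ on $f$ corresponds on $g=d_G-\mathbf{1}-f$ to firing every vertex of $V\setminus\{v\}$ once (a nonnegative script, so linear equivalence is genuinely preserved in both directions), and $f'\geq 0$ is exactly stability of $d_G-\mathbf{1}-f'$ under the paper's convention that $v$ is active when it has at least $d_G(v)$ chips. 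The one ingredient you correctly flag as going beyond what the paper states is the least action principle: if some nonnegative script $\sigma$ makes $g-L\sigma$ stable, then every legal game from $g$ halts. This is indeed standard and has a two-line proof (consider the first instant a legal game is about to fire some $v$ for the $(\sigma(v)+1)$-st time; the chip count at $v$ is then at most $(g-L\sigma)(v)<d_G(v)$, contradicting legality), and it is strictly stronger than the Bj\"orner--Lov\'asz--Shor statement quoted in the paper, which only says all legal games agree. Your proof would be complete if you included that two-line argument rather than asserting it; Baker and Norine's own route is different (via $q$-reduced divisors and Dhar-type arguments), so your version is arguably more elementary and better matched to the chip-firing language this paper uses.
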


\begin{rem}
In \cite{baker2007riemann} it is assumed that $d_G - \mathbf{1} - f$ has a nonnegative amount of chips on each vertex. However, the theorem remains true without this assumption (see for example \cite[Remark 2.6]{divisor_rank_NP_hard}), hence we do not suppose nonnegativity of divisors in the chip-firing game.
\end{rem}

To measure the deviation of a divisor $f$ of $G$ from being non-halting, we introduce its \emph{distance from a non-halting state} \[\disttonh{G}(f) \coloneqq \min\{\deg(g) \mid  g \text{ is an effective divisor of $G$}, f+g \text{ is non-halting}\}.\] Using this notation, the rank of a divisor can be formulated as  $\rank_G(f) = \disttonh{G}(d_G - \mathbf{1} - f) - 1$. Hence computing the rank of a divisor is equivalent to computing the distance of a related divisor, that is, the minimum number of chips that needs to be added to the related divisor to make the chip-firing game non-halting.

\searchprob{Dist-Nonhalt}{A graph $G=(V,E)$ and a divisor $f:V\to\mathbb{Z}$.}{Compute the distance $\disttonh{G}(f)$ of $f$ from a non-halting state.}

The following result provides a sufficient condition for a divisor $f$ being non-halting, see \cite[Lemma 4]{tardos88}.

\begin{prop}[G. Tardos] \label{prop:everyone_fired_implies_non-halting}
Let $G=(V,E)$ be a graph and $f:V\to\mathbb{Z}$ be a divisor. If there is a chip-firing game starting from $f$ where each vertex fires at least once, then $f$ is a non-halting configuration.
\end{prop}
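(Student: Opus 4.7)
First I would argue by contradiction: suppose $f$ is halting. The Bj{\"o}rner-Lov{\'a}sz-Shor theorem then guarantees that every legal game from $f$ halts at the same stable divisor, and that every vertex fires the same number of times in any such game; let $y\in\mathbb{Z}_+^V$ denote this common firing vector. The hypothesis supplies a legal game from $f$ firing every vertex at least once, and I would extend it to a halting game $\sigma^*=(u_1,\dots,u_m)$ whose firing vector is then $y$. Consequently $y\ge\mathbf{1}$ componentwise.

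The plan is now to exhibit a second halting game from $f$ with firing vector $y-\mathbf{1}\neq y$, contradicting the uniqueness above. To construct it, let $R\subseteq\{1,\dots,m\}$ collect the $|V|$ positions at which some vertex fires for the first time in $\sigma^*$, and let $\sigma'$ be the subsequence of $\sigma^*$ obtained by deleting the firings indexed by $R$. By construction, $\sigma'$ has firing vector $y-\mathbf{1}$. Moreover, firing each vertex exactly once leaves any divisor unchanged (a vertex $v$ loses $d_G(v)$ chips when fired and gains $\sum_{u\neq v}d_G(v,u)=d_G(v)$ chips from the firings of its neighbours), so $\sigma'$ ends at the same divisor as $\sigma^*$, namely the stable divisor reached by $\sigma^*$.

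The main obstacle will be verifying that $\sigma'$ is itself a legal game. Fix a position $s\notin R$; since $s$ is not the first firing of $u_s$, the first firing of $u_s$ occurred at some earlier step $s'<s$ and was deleted. Comparing the chip count of $u_s$ immediately before step $s$ in $\sigma'$ versus in $\sigma^*$, one retains the $d_G(u_s)$ chips that $u_s$ lost at step $s'$ of $\sigma^*$, while forgoing at most one first firing of each neighbour $u$, each of which would have contributed $d_G(u_s,u)$ chips to $u_s$, for a total deficit of at most $\sum_{u\neq u_s}d_G(u_s,u)=d_G(u_s)$. The net change is therefore non-negative, so the chip count of $u_s$ before step $s$ in $\sigma'$ is at least as large as in $\sigma^*$, which is in turn at least $d_G(u_s)$ by legality of $\sigma^*$. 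Hence every firing in $\sigma'$ is legal, $\sigma'$ halts with firing vector $y-\mathbf{1}\neq y$, and the contradiction with Bj{\"o}rner-Lov{\'a}sz-Shor is complete. Therefore $f$ must be non-halting.
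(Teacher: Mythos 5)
Your proof is correct, but note that the paper does not actually prove this proposition: it is quoted as Lemma~4 of Tardos's paper and used as a black box. So your argument should be judged as a self-contained derivation, and as such it holds up. The bookkeeping is right: deleting the $|V|$ first firings raises the chip count of $u_s$ just before its (retained) firing at position $s$ by $d_G(u_s)$ from its own deleted first firing, and lowers it by at most $\sum_{u\neq u_s}d_G(u_s,u)=d_G(u_s)$ from the deleted first firings of neighbours, so legality of $\sigma^*$ at position $s$ transfers to $\sigma'$; and since firing every vertex exactly once is the identity on divisors, $\sigma'$ terminates in the same stable configuration, hence is a complete halting game with firing vector $y-\mathbf{1}\neq y$. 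One small point of care: you invoke the Bj\"orner--Lov\'asz--Shor theorem in the form ``the firing vector of a halting game is independent of the choices,'' which is indeed part of the standard statement, but is slightly stronger than the version recalled in the paper's preliminaries (which only asserts uniqueness of the final stable divisor). If you want to rely only on the weaker form, you would need to add a sentence justifying the uniqueness of the firing vector (e.g., the standard diamond/abelian-property argument), or observe that $\sigma'$ and $\sigma^*$ are two maximal legal games of different lengths, which already contradicts the usual statement that the number of moves is determined. As written, the argument is a clean and essentially standard proof of Tardos's lemma.
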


A divisor $f$ is called \emph{recurrent} if there is a non-trivial chip-firing game starting from $f$ that leads back to $f$. Clearly, a recurrent divisor $f$ is also non-halting. Besides the distance of a divisor from a non-halting state, its \emph{distance from a recurrent state} also plays a central role in our investigations, defined as \[\disttorec{G}(f)\coloneqq \min\{\deg(g)\mid g \text{ is an effective divisor of $G$, $f+g$ is recurrent}\}.\]

\searchprob{Dist-Rec}{A graph $G=(V,E)$ and a divisor $f:V\to\mathbb{Z}$.}{Compute the distance $\disttorec{G}(f)$ of $f$ from a recurrent state.}

We will use the following characterization of recurrence, due to Biggs \cite[Lemma 3.6]{Biggs}. As~\cite{Biggs} uses a slightly different model, we include a short proof here.

\begin{prop}\cite[Lemma 3.6]{Biggs}\label{prop:recurrence_char}
Let $G=(V,E)$ be a connected graph and $f:V\to\mathbb{Z}$ be a divisor. Then $f$ is recurrent if and only if there is a game starting from $f$ in which each vertex fires exactly once.  
\end{prop}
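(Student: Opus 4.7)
My plan is to handle the two directions separately. For the ``if'' direction, if a legal game from $f$ fires each vertex exactly once, then firing every vertex once changes each $f(v)$ by $-d_G(v)+\sum_{u\neq v}d_G(v,u)=0$, so the game ends back at $f$; it is non-trivial because $V\neq\emptyset$, so $f$ is recurrent by definition. For the ``only if'' direction, assume $f$ is recurrent via some non-trivial legal game $\sigma$ with firing vector $x^\sigma\in\mathbb{Z}_{\geq 0}^V$. The net change of the divisor under $\sigma$ is $-Lx^\sigma$, where $L$ denotes the graph Laplacian of $G$; since $\sigma$ returns $f$ to $f$ we have $Lx^\sigma=0$. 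Connectedness of $G$ gives $\ker L=\mathbb{Z}\mathbf{1}$, and combined with $x^\sigma\geq 0$, $x^\sigma\neq 0$ this forces $x^\sigma=k\mathbf{1}$ for some integer $k\geq 1$. The case $k=1$ is immediate, so the substantive work is the case $k\geq 2$.

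To handle the latter, I would invoke the abelian property of chip-firing (a consequence of the Bj{\"o}rner--Lov{\'a}sz--Shor theorem cited earlier): the set $\mathcal{L}\subseteq\mathbb{Z}_{\geq 0}^V$ of legal firing vectors from $f$ is closed under componentwise maxima. Hence $\{y\in\mathcal{L}:y\leq\mathbf{1}\}$ has a unique maximum $\mathbf{1}_S$. If $S=V$ then $\mathbf{1}\in\mathcal{L}$ and we obtain the desired game; otherwise $S\subsetneq V$, and setting $T:=V\setminus S$, maximality of $\mathbf{1}_S$ forces $(f-L\mathbf{1}_S)(v)<d_G(v)$, i.e.\ $f(v)<\sum_{u\in T\setminus\{v\}}d_G(v,u)$, for every $v\in T$.

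The main obstacle is then to derive a contradiction between this inequality and the existence of $\sigma$. I plan to use the strong form of the abelian property to extend a legal game realising $\mathbf{1}_S$ into a legal game with firing vector $k\mathbf{1}$, inspect the first moment in the extension at which some $v^*\in T$ fires, and let $z$ denote the firing vector accumulated since the realisation of $\mathbf{1}_S$ just before this step (so $z$ is supported on $S$). Combining the legality inequality at the firing of $v^*$ with the bound $f(v^*)<\sum_{u\in T\setminus\{v^*\}}d_G(v^*,u)$ then yields $\sum_{u\in S}z_u\,d_G(v^*,u)>0$, so some $S$-neighbour of $v^*$ has already fired at least twice before $v^*$ does. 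Propagating this observation---across the subsequent first-firings of $T$-vertices, via a suitable potential function on $T$, or through a minimum-counterexample argument in $k$ or in $|T|$---should eventually force the total firings of some vertex to exceed $k$, contradicting that $\sigma$ has firing vector $k\mathbf{1}$. Formalising this propagation step rigorously is where I anticipate the bulk of the work.
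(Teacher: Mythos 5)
Your ``if'' direction and the first half of the ``only if'' direction are fine: showing that the firing vector of a recurrent game must be $k\mathbf{1}$ via $\ker L=\mathbb{Z}\mathbf{1}$ is equivalent to the paper's cut argument (the paper picks the set $Z$ of vertices fired a minimum number of times and observes $Z$ would gain chips). The divergence is in the remaining step, reducing ``each vertex fires $k$ times'' to ``each vertex fires once'': the paper simply cites \cite[Lemma 4.3]{BL92} for this, whereas you attempt to prove it, and your attempt has a genuine gap exactly where you say the bulk of the work lies. Your setup is sound --- if the maximal legal $0$--$1$ score is $\mathbf{1}_S$ with $T=V\setminus S\neq\emptyset$, then indeed $f(v)<\sum_{u\in T\setminus\{v\}}d_G(v,u)$ for every $v\in T$ --- but the proposed propagation does not go through. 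The inequality $\sum_{u\in S}z_u\,d_G(v^*,u)>0$ only tells you that \emph{one} $S$-neighbour of the \emph{first} $T$-vertex to fire has fired twice; for the second and later $T$-vertices to fire, the analogous computation picks up positive contributions from $T$-neighbours that have already fired once, and the resulting inequality becomes vacuous. There is no evident potential function or induction on $k$ or $|T|$ that turns ``one vertex fired twice'' into ``some vertex fires more than $k$ times'', so as written the contradiction is not established.

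The good news is that your setup can be completed without any propagation, by looking at \emph{last} firings instead of first firings. In the legal game with score $k\mathbf{1}$, let $v^{**}\in T$ be the first vertex of $T$ to perform its $k$-th firing. Just before that moment, $v^{**}$ has fired $k-1$ times, every $u\in T$ has fired $c_u\le k-1$ times, and every $u\in S$ has fired $c_u\le k$ times. Legality of this firing gives
\[
f(v^{**})\;\ge\;\sum_{u}(k-c_u)\,d_G(v^{**},u)\;\ge\;\sum_{u\in T\setminus\{v^{**}\}}d_G(v^{**},u),
\]
directly contradicting the inequality you derived from the maximality of $S$. (With this finish you do not even need the componentwise-maxima property to define $S$; a greedy maximal legal game that fires no vertex twice suffices.) So the route is salvageable, but the step you flagged as needing formalisation is a real hole, not a routine verification, and the argument you sketch for it is not the one that closes it.
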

\begin{proof}
The `if' direction is obvious, hence we concentrate on the `only if' direction. Assume that $f$ is a recurrent divisor. Then, by definition, there is a non-trivial game that transforms $f$ to itself. We claim that in such a game each vertex fires the same number of times. Indeed, otherwise let $Z\subsetneq V$ denote the proper subset of vertices that fired a minimum number of times. As every vertex in $V\setminus Z$ fired more times than the vertices in $Z$ and $G$ is connected, $Z$ necessarily gains chips compared to $f$, a contradiction.  By \cite[Lemma 4.3]{BL92}, there also exists a game starting from $f$ in which each vertex fires exactly once, concluding the proof.
\end{proof}

\paragraph{Minimum Target Set Selection}

In the Minimum Target Set Selection problem ({\sc Min-TSS}), introduced by Kempe, Kleinberg, and \'E.~Tardos~\cite{kempe2015maximizing}, we are given a simple graph $G=(V,E)$ with $|V|=n$, together with a threshold function $\tau:V\to\mathbb{Z}_+$, and we consider the following activation process. As a starting step, we can activate a subset $S\subseteq V$ of vertices. Then, in every subsequent round, a vertex $v$ becomes activated if at least $\tau(v)$ of its neighbors are already active. Note that once a vertex is active, it remains active in all future rounds. The goal is to find a minimum sized initial set $S$ of active vertices (called a \emph{target set}) so that the activation spreads to the entire graph (i.e., all vertices are eventually activated). For ease of notation, we measure the size of an optimal solution by \[\ts_G(\tau)\coloneqq\min\{|S|\mid S\subseteq V,\ \text{$S$ is a target set of $G$ with respect to $\tau$}\}.\]

\searchprob{Min-TSS}{A simple graph $G=(V,E)$ and thresholds $\tau:V\to\mathbb{Z}_+$.}{Find a minimum sized target set attaining $\ts_G(\tau)$.}

{\sc Min-TSS} has long been a focus of research due to its wide applications in real life scenarios. Unfortunately, not only finding a smallest target set but also approximating the minimum value is difficult. 
Under the assumption $P\neq NP$, Feige and Kogan~\cite[Theorem 3.5]{feige2021target} showed that the problem is difficult even in 3-regular graphs.

\begin{prop}[Feige and Kogan]\label{prop:feige}
Unless $P=NP$, \textsl{\textsc{Min-TSS}} cannot be approximated to within a factor of $O(2^{\log^{1-\varepsilon} n})$ for any $\varepsilon>0$ even in $3$-regular graphs with $\tau(v)\in\{1,2\}$ for $v\in V$.
\end{prop}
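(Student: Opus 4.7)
The plan is to obtain the hardness factor via a gap-preserving reduction from a problem already known to be quasi-polynomially hard to approximate, and then push the construction down to the $3$-regular, threshold-$\{1,2\}$ regime without damaging the gap. The natural source is Min-Rep (equivalently Label Cover), which is inapproximable within $2^{\log^{1-\varepsilon}n}$ under $P\neq NP$ via iterated parallel repetition, and has been the standard starting point for quasi-polynomial lower bounds on \textsc{Min-TSS} since the work of Chen.

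First, from an instance of Min-Rep one builds a general \textsc{Min-TSS} instance: introduce a vertex for every (super-vertex, label) pair, add small gadget vertices encoding each super-edge's consistency constraint, and assign thresholds so that activating the whole graph is equivalent to picking a valid labeling. A target set of size $k$ then corresponds to a Min-Rep solution of size roughly $k$, so the gap is preserved; this alone yields $2^{\log^{1-\varepsilon}n}$-hardness for \textsc{Min-TSS} on general graphs with arbitrary thresholds.

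The main obstacle, and where Feige and Kogan must work harder than Chen, is the second step: reducing the degrees to $3$ and the thresholds to $\{1,2\}$ without collapsing the gap. I would replace each vertex $v$ of high degree and threshold $\tau(v)$ by a gadget built from $3$-regular internal nodes, using threshold-$2$ nodes as binary AND-gates and threshold-$1$ nodes as OR-gates; composing a tree of ANDs above a layer of ORs synthesizes any threshold between $1$ and $d(v)$, while degree-balancing leaves ensure that every final vertex has degree exactly $3$. The delicate part is ruling out \emph{shortcut activations}: one must ensure that seeding a handful of internal gadget vertices cannot activate the gadget cheaper than the intended activation of $v$, since otherwise the adversary could bypass the reduction and the gap would collapse. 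This is typically handled by making every internal vertex have at least two of its three neighbors anchored inside a rigid sub-gadget or pointing back toward $v$'s neighborhood, so that any attempted shortcut triggers a cascade costing at least as much as a legitimate activation. Once the soundness of the gadget is verified and the blowup is confirmed to be polynomial, the gap factor $2^{\log^{1-\varepsilon}n}$ survives (up to renaming of $n$), yielding the claimed hardness in the restricted setting.
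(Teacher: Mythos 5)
This proposition is not proved in the paper at all: it is imported verbatim as Theorem~3.5 of Feige and Kogan~\cite{feige2021target}, so there is no internal proof to compare your attempt against. The authors use it purely as a black box to transfer hardness through their reductions.

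Taken on its own terms, your sketch points in the historically correct direction (the quasi-polynomial inapproximability of \textsc{Min-TSS} does originate in a reduction from Min-Rep, due to Chen, and the contribution of Feige and Kogan is indeed the bounded-degree, small-threshold refinement), but it has two genuine gaps. First, the hardness of Min-Rep within a factor of $2^{\log^{1-\varepsilon}n}$ obtained ``via iterated parallel repetition'' only holds under the assumption $NP \not\subseteq DTIME(n^{\mathrm{polylog}\, n})$, because the repetition blows the instance up quasi-polynomially; to get the statement under $P\neq NP$, as claimed here, one needs a polynomial-size starting point such as the Moshkovitz--Raz PCP, and your argument does not address this. Second, the entire technical content of the restricted-degree result lies in the soundness of the degree- and threshold-reduction gadgets, and your treatment of that step is aspirational rather than demonstrative: you describe what the gadget \emph{should} accomplish (no cheap shortcut activations) and how such issues are ``typically handled,'' but you neither exhibit a concrete gadget nor verify that seeding its internal vertices cannot beat the intended cost. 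Since a naive AND/OR tree simulation of a threshold-$\tau(v)$ vertex generically \emph{does} admit shortcuts (seeding the root of the tree activates $v$'s successors at unit cost), this is precisely the point where the proof could fail, and it cannot be left as a remark.
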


Also, Charikar, Naamad, and Wirth~\cite{charikar2016approximating} proved that, assuming the Planted Dense Subgraph Conjecture, {\sc Min-TSS} is difficult to approximate within a polynomial factor.

\begin{prop}[Charikar, Naamad, and Wirth]\label{prop:charikar}
Assuming the Planted Dense Subgraph Conjecture, {\sc Min-TSS} cannot be approximated to within a factor of $O(n^{1/2-\varepsilon})$ by a probabilistic polynomial-time algorithm for any $\varepsilon>0$.  
\end{prop}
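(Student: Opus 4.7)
The plan is to establish the claim by a gap-preserving reduction from the Planted Dense Subgraph (PDS) problem to \textsc{Min-TSS}. Recall that PDS asks one to distinguish, for suitable parameters $N,k,p,q$ with $q>p$, between a purely random graph $G \sim G(N,p)$ and a graph drawn from $G(N,p)$ into which a random dense subgraph on $k$ vertices with edge probability $q$ has been planted. The Planted Dense Subgraph Conjecture asserts that no polynomial-time randomized algorithm distinguishes these two ensembles with non-negligible advantage in an appropriate regime, for example $k = N^{1/2-o(1)}$.

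The first main step is the reduction itself. Given a PDS instance on vertex set $V(G)$, I would build a \textsc{Min-TSS} instance on a vertex set consisting of $V(G)$ together with auxiliary amplifier gadgets; the thresholds are chosen close to the vertex degrees, so that activation propagates only once a vertex sees a sufficiently large fraction of its neighborhood already active. The amplifier gadgets are arranged so that a small cluster lying inside a locally dense pocket can trigger a global cascade, whereas a scattered activation cannot. In the completeness case, the planted subgraph serves as a target set: its high internal density ensures that, once the whole planted set is switched on, the cascade first saturates that set, then propagates through the amplifiers, and finally reaches the rest of $G$, giving $\ts_G(\tau) \le k + o(k)$.

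For soundness, the crux is to show that in a purely random $G(N,p)$ no seed of comparable size $s \approx k$ can cascade to cover the entire instance. This uses expansion properties of random graphs: for every small subset $S \subseteq V(G)$, its neighborhood is spread across essentially all of $V(G)$, so only very few vertices outside $S$ have a large enough fraction of their neighbors already in $S$ to pass the threshold, and the activation process stalls after contributing a bounded number of rounds. Iterating this round-by-round argument and bounding the number of vertices newly activated at each step, one concludes that any target set must have size at least roughly $n^{1/2-\varepsilon}$ times the size in the planted case.

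The hard part is calibrating the parameters so that the soundness gap, which in raw PDS is only a gap of a constant factor in edge density, is amplified into a polynomial factor of roughly $n^{1/2-\varepsilon}$ in the resulting \textsc{Min-TSS} instance of size $n$. This typically requires a product-type or layered amplification gadget that blows up the instance size while boosting the ratio of target set sizes, and one must verify concentration of the relevant expansion and density properties so that the reduction succeeds with high probability over the random choices in PDS. Once the reduction is in place, a \textsc{Min-TSS} approximation algorithm with ratio better than $n^{1/2-\varepsilon}$ would yield a distinguisher for the two PDS distributions in polynomial randomized time, contradicting the Planted Dense Subgraph Conjecture.
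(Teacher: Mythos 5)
This proposition is not proved in the paper at all: it is imported as a black box from Charikar, Naamad, and Wirth \cite{charikar2016approximating}, so the only thing to compare your proposal against is the actual content of that reference. Your high-level plan points in the right direction --- the hardness does come from a gap-preserving reduction that turns the planted dense subgraph into a small target set (completeness) and uses the sparsity/spread of a purely random $G(N,p)$ to rule out small target sets (soundness), and you correctly trace the exponent $1/2$ back to the PDS regime $k\approx N^{1/2-o(1)}$.

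However, as written the proposal has a genuine gap: every step that carries the mathematical weight is deferred rather than carried out. You never specify the amplifier gadget, the threshold function $\tau$, or the PDS parameters $N,k,p,q$, and both the completeness bound $\ts_G(\tau)\le k+o(k)$ and the soundness bound depend entirely on that unspecified construction. The soundness step is exactly where naive versions of this reduction fail: in a random graph with thresholds set near the degrees, bootstrap-percolation effects can let seeds far smaller than the whole vertex set cascade globally, so ``expansion implies the process stalls'' is not a proof but the statement of the difficulty. You acknowledge this yourself (``the hard part is calibrating the parameters,'' ``this typically requires a product-type or layered amplification gadget,'' ``one must verify concentration''), which is an honest description of a proof plan but not a proof. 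To close the gap you would need to exhibit the layered construction explicitly, fix the parameters, prove the completeness and soundness claims with high probability over the randomness of the PDS instance, and check that the resulting gap in the \textsc{Min-TSS} instance of size $n$ is indeed $n^{1/2-\varepsilon}$ after accounting for the blow-up in instance size --- which is precisely the content of the cited paper.
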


\section{Hardness of approximating the rank}
\label{sec:main}

Now we turn to the proof of the main result of the paper, and show that approximating the rank of a graph divisor within reasonable bounds is hard. The high level idea of the proof is the following. First, we show that the Minimum Target Set Selection problem reduces to computing the distance of a divisor on an auxiliary undirected graph from a recurrent state (Section~\ref{sec:red1}). Then we show that computing the distance of a divisor from a recurrent state reduces to computing the distance of a divisor on a slightly modified graph from a non-halting state (Section~\ref{sec:red2}). As the computation of the rank is equivalent to computing the distance from a non-halting state for an appropriately modified divisor, the lower bounds on the approximability of the rank will follow by Propositions~\ref{prop:feige} and \ref{prop:charikar}; see Theorem~\ref{thm:main}. 

\subsection{Reduction of {\sc Min-TSS} to {\sc Dist-Rec}}
\label{sec:red1}

Consider an instance $G=(V,E)$ and $\tau:V\to\mathbb{Z}_+$ of {\sc Min-TSS}. We construct a {\sc Dist-Rec} instance $G'=(V',E')$ and $x:V'\to\mathbb{Z}$ such that $\ts_G(\tau)=\disttorec{G'}(x)$.

We define $G'$ as follows. For each $v\in V$, we add three vertices $v_i$, $v_c$ and $v_o$ to $V'$. In addition, for each edge $e=uv\in E$, we add two vertices $p_{uv}$ and $p_{vu}$ to $V'$. Now let $N=|V|+2$. For each vertex $v\in V$, we add $N$ parallel edges between $v_i$ and $v_c$ and a single edge between $v_c$ and $v_o$. Furthermore, for each edge $e=uv\in E$, we add $N$ parallel edges between $u_o$ and $p_{uv}$, a single edge between $p_{uv}$ and $v_i$, $N$ parallel edges between $v_o$ and $p_{vu}$, and a single edge between $p_{vu}$ and $u_i$. These edges together form $E'$. For ease of discussion, we denote by $V'_\circ\coloneqq\{v_i,v_o\mid v\in V\}$ and $V'_\bullet\coloneqq\{v_c\mid v\in V\}\cup\{p_{uv},p_{vu}\mid uv\in E\}$.

\begin{figure}[t!]
\centering
\begin{subfigure}[t]{0.48\textwidth}
  \centering
  \includegraphics[width=.5\linewidth]{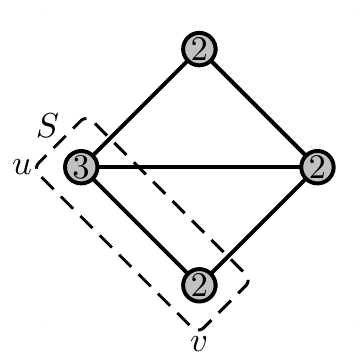}
  \caption{An instance of {\sc Min-TSS}. The numbers denote the thresholds at the vertices. The set $S=\{u,v\}$ is a target set; it is not difficult to check that $S$ has minimum size.}
  \label{fig:red_1}
\end{subfigure}\hfill
\begin{subfigure}[t]{0.48\textwidth}
  \centering
  \includegraphics[width=.5\linewidth]{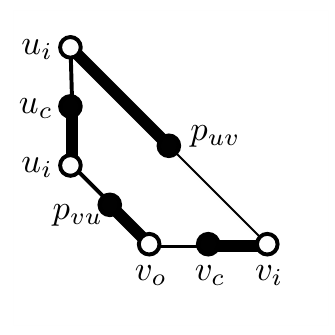}
  \caption{The subgraph of $G'$ induced by the vertices $\{v_i,v_o,u_i,u_o\}\subseteq V'_\circ$ and $\{v_c,u_c,p_{uv},p_{vu}\}\subseteq V'_\bullet$. Thick edges denote a bundle of $N=|V|+2=6$ parallel edges.}
  \label{fig:red_3}
\end{subfigure}
\vspace{10pt}

\begin{subfigure}[t]{\textwidth}
  \centering
  \includegraphics[width=.3\linewidth]{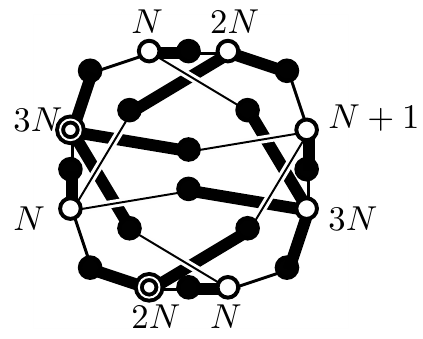}
  \caption{The corresponding {\sc Dist-Rec} instance. The numbers denote the initial chip distribution $x$ where $x(v)=1$ for each $v\in V'_\bullet$. Adding one chip on vertices with double border results in a recurrent divisor.}
  \label{fig:red_2}
\end{subfigure}
\caption{An illustration of the reduction of {\sc Min-TSS} to {\sc Dist-Rec}.}
\label{fig:red}
\end{figure}

We define the chip configuration $x$ in the following way. For each $v\in V$, let $x(v_o)\coloneqq d_{G'}(v_o)-1=N\cdot d_G(v)$ and $x(v_i)\coloneqq d_{G'}(v_i)-\tau(v)=d_G(v)+N-\tau(v)$. For each $z\in V'_\bullet$, let $x(z)\coloneqq d_{G'}(z)-N=1$. Note that $0\leq x(z)\leq d_{G'}(z)$ holds for each $z\in V'$. For an illustration of the construction, see Figure~\ref{fig:red}.

We claim that $\ts_G(\tau)=\disttorec{G'}(x)$. We prove the two directions separately.

\begin{cl}\label{cl:disstorec_tss_1}
$\ts_G(\tau)\geq\disttorec{G'}(x)$.
\end{cl}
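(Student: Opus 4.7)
The plan is to take a minimum target set $S\subseteq V$ of the {\sc Min-TSS} instance $(G,\tau)$, set $k\coloneqq|S|=\ts_G(\tau)$, and construct an effective divisor $g\colon V'\to\mathbb{Z}_+$ of degree $k$ for which $x+g$ is recurrent. I would define $g$ by $g(v_o)\coloneqq 1$ for every $v\in S$ and $g(z)\coloneqq 0$ otherwise, yielding $\deg(g)=k$. By Proposition~\ref{prop:recurrence_char}, to prove that $x+g$ is recurrent it suffices to exhibit a chip-firing game on $G'$ starting from $x+g$ in which every vertex fires exactly once; this gives $\disttorec{G'}(x)\leq k$, as desired.

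The firing schedule I propose mirrors the target-set activation process. Write $A_0\coloneqq S$ and, for $t\geq 1$, $A_t\coloneqq A_{t-1}\cup\{v\in V:|N(v)\cap A_{t-1}|\geq\tau(v)\}$; since $S$ is a target set, $A_T=V$ for some $T$. In round $0$, for each $v\in S$, I first fire $v_o$ and then every $p_{vu}$ with $u\in N(v)$. In round $t$ for $1\leq t\leq T$, for each $v\in A_t\setminus A_{t-1}$, I fire $v_i$, $v_c$, $v_o$, and every $p_{vu}$ with $u\in N(v)$, in this order. Finally, in round $T+1$, for each $v\in S$, I fire $v_i$ and then $v_c$. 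Every vertex of $G'$ appears exactly once in the schedule.

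The main task is to verify legality of each firing by induction on the round number. The governing fact is that by the end of round $t-1$, every $u\in A_{t-1}$ has had $u_o$ and each $p_{uw}$ (with $w\in N(u)$) fired, depositing one chip on $w_i$ for every $w\in N(u)$. Therefore for $v\in A_t\setminus A_{t-1}$ the vertex $v_i$ holds $x(v_i)+|N(v)\cap A_{t-1}|\geq (d_G(v)+N-\tau(v))+\tau(v)=d_{G'}(v_i)$ chips at the start of round $t$ and may legally fire; the remainder of $v$'s cascade is then legal by direct chip accounting, since $v_c$ reaches $N+1$ chips after $v_i$ fires, $v_o$ reaches $N\cdot d_G(v)+1$ after $v_c$ fires, and each $p_{vu}$ reaches $N+1$ after $v_o$ fires. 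For $v\in S$, the added chip makes $v_o$ active in round $0$; by the end of round $T$ all $d_G(v)$ neighbors of $v$ have cascaded, so $v_i$ has accumulated $d_G(v)\geq\tau(v)$ chips beyond $x(v_i)$ and is active in round $T+1$, and the subsequent firing of $v_c$ is legal because $v_c$ has received one chip from $v_o$ in round $0$ and $N$ chips from $v_i$, for a total of $N+2\geq d_{G'}(v_c)$.

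The most delicate point is this final step: for $v\in S$ no chip is added directly to $v_i$, yet the backward paths $u_o\to p_{uv}\to v_i$ from cascaded neighbors suffice to activate $v_i$, provided $\tau(v)\leq d_G(v)$; this assumption is satisfied in the reductions relevant here. One should also observe that after round $T+1$ the vertices $v_o$ with $v\in S$ become active again, but this is harmless, since Proposition~\ref{prop:recurrence_char} only demands a firing sequence in which every vertex fires exactly once, and we simply terminate the game after executing the schedule above.
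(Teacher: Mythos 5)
Your proof is correct and follows essentially the same route as the paper: place one extra chip on $v_o$ for each $v\in S$, exhibit a game in which every vertex of $G'$ fires exactly once by simulating the activation process, and invoke Proposition~\ref{prop:recurrence_char}. Your explicit round-by-round schedule and your flagging of the implicit hypothesis $\tau(v)\le d_G(v)$ (needed so that $v_i$ can eventually fire for $v\in S$) are if anything more careful than the paper's argument, which relies on the same hypothesis without stating it.
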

\begin{proof}
Take a minimum sized target set $S\subseteq V$. Let $y\in \mathbb{Z}^{V'}$ be defined as 
\begin{align*}
    y(z)\coloneqq\begin{cases} 1 & \text{if $z=v_o$ for some $v\in S$}, \\
0 & \text{otherwise.} 
\end{cases}
\end{align*}
We claim that $x+y$ is a recurrent chip-configuration on $G'$. To see this, by Proposition~\ref{prop:recurrence_char}, it suffices to show that every vertex can fire exactly once starting from $x+y$. If $u\in S$, then $(x+y)(u_o)=d_{G'}(u_o)$, hence these vertices can fire without receiving any further chips from other vertices; fire all of them in an arbitrary order. If $u_o$ fires, then the vertex $p_{uv}$ receives $N$ chips for each edge $uv\in E$, therefore, $p_{uv}$ can fire. Fire them when we can. If a vertex $v\in V$ has at least $\tau(v)$ neighbors in $S$, then $v_i$ received at least $\tau(v)$ chips up to this point, that is, the total number of chips on $v_i$ is at least $d_{G'}(v_i)$. Thus, $v_i$ can fire as well. Firing $v_i$ sends $N$ chips to $v_c$, hence at this point $v_c$ can also fire, which results in sending one chip to $v_o$ and making a firing at $v_o$ possible.

Continuing this way, we get that if a vertex $v\in V$ becomes active in the activation process in the {\sc Min-TSS} instance, then $v_o$ can fire at a certain point. Hence eventually, there is a moment when $v_o$ has fired once for every $v\in V$. This means that for each edge $uw\in E$ and any of the vertices $p_{uw}$ and $p_{wu}$, either it fired once or has enough chips to do so; in the latter case, let it fire. After these, for each vertex $v\in V$, the vertex $v_i$ has either fired once or has enough chips to do so; in the latter case, let it fire, resulting in a chip configuration for which $v_c$ can fire as well. We conclude that eventually each vertex fires (exactly) once, hence $(x+y)$ is recurrent by Proposition~\ref{prop:recurrence_char} and $y$ is effective.

By the above, $\ts_G(\tau)=|S|=\deg(y)\geq \disttorec{G'}(x)$, concluding the proof of the claim.
\end{proof}

\begin{cl}\label{cl:disttorec_tss_2}
$N>\disttorec{G'}(x)+1$.
\end{cl}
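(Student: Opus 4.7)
The plan is to chain Claim~\ref{cl:disstorec_tss_1} with a trivial upper bound on $\ts_G(\tau)$. The whole point of choosing $N = |V|+2$ in the construction is to force this kind of slack, so the argument should be essentially a one-liner.

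First I would observe that for any instance of \textsc{Min-TSS}, the set $S = V$ is a (trivial) target set: every vertex is initially active, so the activation process spreads to the entire graph vacuously. This gives $\ts_G(\tau) \leq |V|$, regardless of the thresholds $\tau$. Combining this with Claim~\ref{cl:disstorec_tss_1}, which asserts $\ts_G(\tau) \geq \disttorec{G'}(x)$, yields $\disttorec{G'}(x) \leq |V|$.

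Since $N = |V| + 2$ by the construction of $G'$, this gives $\disttorec{G'}(x) + 1 \leq |V| + 1 = N - 1 < N$, which is the required strict inequality. The main (and only) observation is therefore just that $\ts_G(\tau)$ is bounded by $|V|$; no further structural analysis of $G'$ or of the chip configuration $x$ is needed, and there is no real obstacle.
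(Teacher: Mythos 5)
Your proof is correct and follows exactly the paper's own argument: bound $\ts_G(\tau)\leq |V|$ trivially, apply Claim~\ref{cl:disstorec_tss_1}, and use $N=|V|+2$. Nothing further is needed.
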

\begin{proof}
Observe that $\ts_G(\tau)\leq |V|$ clearly holds. By Claim~\ref{cl:disstorec_tss_1}, we get $N=|V|+2\geq\ts_G(\tau)+2>\disttorec{G'}(x)+1$ as stated.
\end{proof}

\begin{cl}\label{cl:disttorec_tss_3}
$\ts_G(\tau)\leq \disttorec{G'}(x)$.
\end{cl}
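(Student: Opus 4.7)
The plan is to take an optimal effective divisor $y$ with $\deg(y)=\disttorec{G'}(x)$ and $x+y$ recurrent, and to read off a target set of $G$ directly from a firing sequence certifying recurrence. By Proposition~\ref{prop:recurrence_char}, there is a firing sequence $\sigma$ starting from $x+y$ in which every vertex of $V'$ fires exactly once; write $t(z)$ for the step at which $z$ fires in $\sigma$.

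The crucial structural step, which I expect to be the main technical obstacle, is an \emph{ordering lemma} for $\sigma$: for every $v\in V$ and every $uv\in E$ one must have $t(v_i)<t(v_c)$, $t(u_o)<t(p_{uv})$, and $t(v_o)<t(p_{vu})$. The reason is that each $z\in V'_\bullet$ has degree $N+1$, holds just one chip in $x$, and has a unique neighbour joined to it by $N$ parallel edges, while every other neighbour sends only a single chip per firing. Claim~\ref{cl:disttorec_tss_2} gives $\deg(y)\le N-2$, so even pouring all of $y$ onto $z$ plus one chip from every other neighbour leaves $z$ with at most $1+(N-2)+1=N<d_{G'}(z)$ chips; thus the ``bulk'' neighbour must fire before $z$.

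Using these orderings, for each $v\in V$ put $k_v\coloneqq|\{u\in V:uv\in E,\ t(u_o)<t(v_o)\}|$ and define $S\coloneqq\{v\in V:k_v<\tau(v)\}$. A short induction along the order in which the $v_o$'s fire in $\sigma$ shows that every $v\notin S$ has at least $\tau(v)$ neighbours already activated by that time in the TSS process, so $S$ is a target set of $G$. It remains to argue $|S|\le\deg(y)$, and this is where the ordering lemma is used a second time. For each $v\in S$ I will exhibit a distinct chip of $y$ sitting on a vertex attached to $v$. If $t(v_o)<t(v_c)$, then by the ordering lemma neither $v_c$ nor any $p_{vu}$ has fired by the time $v_o$ fires, so the single chip missing from $x(v_o)$ must come from $y$, giving $y(v_o)\ge 1$. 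Otherwise $t(v_i)<t(v_c)<t(v_o)$; the $p_{uv}$'s firing before $v_i$ all satisfy $t(u_o)<t(v_o)$ and are therefore at most $k_v<\tau(v)$ in number, while $v_c$ has not yet fired, so $v_i$ must collect at least $\tau(v)-k_v\ge 1$ chips from $y$, giving $y(v_i)\ge 1$. Since the vertices $v_o$ and $v_i$ are distinct across different $v\in V$, summing yields $\deg(y)\ge|S|\ge\ts_G(\tau)$, which is the desired inequality.
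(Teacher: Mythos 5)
Your proof is correct, and while the first half coincides with the paper's argument, the second half takes a genuinely different and arguably cleaner route. The ordering lemma (each $z\in V'_\bullet$ must fire after its bulk neighbour, since $1+\deg(y)+1\leq N<N+1=d_{G'}(z)$) is exactly the paper's opening step. Where you diverge is in how the target set is extracted. The paper normalizes the optimal divisor $y$: it argues $y(z)=0$ on $V'_\bullet$, then shows via a chip-relocation argument (moving chips from $v_i$ to suitable $u_o^j$ and replaying the game) that one may assume $y(v_i)=0$, and finally that $y(v_o)\leq 1$; the target set is then read off as the support of $y$ on the $v_o$'s, and a round-by-round simulation shows the chip-firing game mimics the activation process. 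You instead define $S$ purely from the firing order via the counts $k_v$, which makes the verification that $S$ is a target set an immediate induction, and then recover $|S|\leq\deg(y)$ by a charging argument: each $v\in S$ forces a chip of $y$ on $v_o$ (if $v_o$ fires before $v_c$, it receives nothing beforehand, so $y(v_o)\geq d_{G'}(v_o)-x(v_o)=1$) or on $v_i$ (otherwise $t(v_i)<t(v_c)<t(v_o)$, and the at most $k_v<\tau(v)$ chips arriving from the $p_{uv}$'s cannot cover the deficit $\tau(v)$, so $y(v_i)\geq\tau(v)-k_v\geq 1$), and these charged vertices are distinct across $v\in S$. Your version avoids the somewhat delicate ``we may assume $y(v_i)=0$'' modification of the optimal solution, at the cost of a slightly more involved case analysis in the counting step; both arguments are sound and of comparable length.
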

\begin{proof}
Let $y$ be a chip configuration such that $y\geq 0$, $x+y$ is recurrent, and $\deg(y)=\disttorec{G'}(x)$. By Proposition~\ref{prop:recurrence_char}, there is a game starting from $x+y$ in which each vertex fires exactly once; from now on, we consider such a sequence of firings.

Consider a vertex $z\in V'_\bullet$ and let $w$ be the unique vertex in $V'_\circ$ that is connected to $z$ by $N$ parallel edges. By definition, we have $x(z)=1$. If $z$ fires before $w$ in the game, then $z$ can receive at most one chip from its other neighbor before firing. That is, $y(z)\geq N-1$ as otherwise $z$ has not enough chips for firing before $w$. By Claim~\ref{cl:disttorec_tss_2}, we have $\disttorec{G'}(x)< N-1$, a contradiction. Hence $z$ necessarily fires after $w$ in the game, meaning that it receives $N$ chips from $w$. That is, it has enough chips to fire even if $y(z)=0$. As $z$ only fires once and $\deg(y)$ is as small as possible, we conclude that $y(z)=0$. Moreover, $z$ fires later than $w$ for each $z\in V'_\bullet$, where $w$ is the unique neighbor of $z$ that is connected to $z$ by $N$ parallel edges. 

Now consider a vertex $v_i$ for some $v\in V$. We show that we may assume $y(v_i)=0$. Indeed, suppose that $y(v_i)=k>0$. Let $\{u^1, \dots, u^d\}$ be the neighbors of $v$ in $G$.
We create a new $y'$ by replacing the chips of $y$ from $v_i$ to some $u^j_o$ such that $y'$ remains effective with $\deg(y')\leq \deg(y)$, $x+y'$ is also recurrent, but $y'(v_i)=0$. (The minimality of $\deg(y)$, then of course implies $\deg(y)=\deg(y')$.)
Take a game started from $x+y$ in which each vertex of $G'$ fires exactly once. We know that $v_c$ fires after $v_i$ in this game. We have $(x+y)(v_i)=\deg_{G'}(v_i)-\tau(v) + k$. Hence by the time $v_i$ fires, at least $\max\{0,\tau(v) -k\}$ vertices among $\{p_{u^1v}, \dots, p_{u^dv}\}$ fired. Choose $k$ vertices $u_o^j$ such that $p_{u^jv}$ did not fire (or if there are less such vertices, choose all of them), and put $y'(u_o^j)=1$ for each of them. Moreover, put $y'(v_i)=0$. On all other vertices, let $y(w)=y'(w)$ This way, $y'\geq 0$ with $\deg(y')\leq\deg(y)$. Then, started from $x+y'$, we can play the same game as the one started from $x+y$ up to the firing of $v_i$. Moreover, if $y'(u^j_o)=1$, then $u_o^j$ and $p_{u^jv}$ can also fire as $x(u^j_o)=d_{G'}(u^j_o)-1$. Hence $v_i$ receives at least $\tau(v)$ chips. Thus, $v_i$ can also fire, and we can complete the chip-firing game. (We only brought forward the firings of $u_o^j$ and $p_{u^jv}$ for some $j$.)

Finally, consider a vertex $v_o$ for some $v\in V$. As each vertex fires exactly once, we get $y(v_o)\leq d_{G'}(v_o)-x(v_o)=d_{G'}(v_o)-(d_{G'}(v_o)-1)=1$. 

Define $S=\{v\in V\mid y(v_o)=1\}$. Our goal is to show that $S$ is a target set of the {\sc Min-TSS} instance $(G,\tau)$. We prove this by showing that the game on $G'$ starting from $x+y$ mimics the activation process on $G$ starting from $S$ in the following sense: by the time a vertex $v_o$ becomes active in the chip firing sense, its corresponding vertex $v\in V$ becomes active in the activation process on $G$. This clearly holds for vertices in $S$. Consider a general phase of the game, and assume that the condition holds, that is, for each vertex $v_o$ that was fired so far, $v$ became active in the {\sc Min-TSS} instance as well. Let $v\in V\setminus S$ and assume that $v_o$ is the next vertex to fire. Recall that $y(v_o)=0$ and that $v_o$ has to fire before the vertex $p_{vu}$ for every $uv\in E$. Apart from these vertices, $v_o$ has only one neighbor, namely $v_c$. This means that for $v_o$ to have enough chips, $v_c$ had to fire before $v_o$. As $v_i$ had to fire before $v_c$, we get that $v_i$ had to fire before $v_o$. As $y(v_i)=0$, $v_i$ needed $\tau(v)$ extra chips to be able to fire. As $v_c$ fired after $v_i$, these extra chips could only be procured from vertices $p_{uv}$ where $uv\in E$. As each of these vertices fired at most once and each of them sent one chip to $v_i$ upon firing, at least $\tau(v)$ of them had to fire before firing $v_i$. This means that at least $\tau(v)$ of the vertices $u_o$ with $uv\in E$ had to fire so far. By our hypothesis, these vertices $u\in V$ were active in the {\sc Min-TSS} instance by the time they fired, implying that $v$ is also active in the {\sc Min-TSS} instance.

By the above, $\ts_G(\tau)\leq |S|=\deg(y)=\disttorec{G'}(x)$, concluding the proof of the claim.
\end{proof}

Note that the size of $G'$ is polynomial in the size of $G$. In particular, we have $|V'|=3\cdot |V|+2\cdot |E|=O(|V|^2)$ as $G$ is simple.

\subsection{Reduction of {\sc Dist-Rec} to {\sc Dist-Nonhalt}}
\label{sec:red2}

Consider a {\sc Dist-Rec} instance $G=(V,E)$ and $f:V\to\mathbb{Z}$. We construct a {\sc Dist-Nonhalt} instance $G'=(V',E')$ and $f':V'\to\mathbb{Z}$ such that $\disttorec{G}(f)=\disttonh{G'}(f')$. Note the following.

\begin{cl}\label{cl:M}
$\disttorec{G}(f)\leq 2\cdot|E|+\displaystyle\sum_{v\in V: f(v)<0}|f(v)|.$
\end{cl}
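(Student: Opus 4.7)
The plan is to exhibit an effective divisor $g$ on $G$ of degree at most $2|E|+\sum_{v:f(v)<0}|f(v)|$ such that $f+g$ is recurrent; by Proposition~\ref{prop:recurrence_char}, it then suffices to arrange a chip-firing game from $f+g$ in which every vertex fires exactly once.

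Concretely, I would fix an arbitrary enumeration $v_1,\dots,v_n$ of $V$ and target a game in which the vertices fire in this order, each exactly once. Immediately before $v_i$ is fired, the chip count at $v_i$ equals $f(v_i)+g(v_i)+\sum_{j<i}d_G(v_i,v_j)$, since only $v_1,\dots,v_{i-1}$ have fired and each contributed $d_G(v_i,v_j)$ chips. Legality of firing $v_i$ demands that this quantity is at least $d_G(v_i)=\sum_{j\neq i}d_G(v_i,v_j)$, which rearranges to $g(v_i)\geq \sum_{j>i}d_G(v_i,v_j)-f(v_i)$. I would therefore set
\[
g(v_i)\coloneqq\max\Bigl\{0,\ \sum_{j>i}d_G(v_i,v_j)-f(v_i)\Bigr\}.
\]
Then $g$ is effective by construction, the enumeration $v_1,\dots,v_n$ is a legal firing sequence from $f+g$ in which each vertex fires exactly once, and Proposition~\ref{prop:recurrence_char} yields that $f+g$ is recurrent.

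For the degree estimate, I would bound $g(v_i)\leq \sum_{j>i}d_G(v_i,v_j)+\max\{0,-f(v_i)\}$ and sum over $i$. The first piece contributes $\sum_{i<j}d_G(v_i,v_j)=|E|$, since each edge of $G$ is counted exactly once over the ordered pairs $i<j$; the second piece contributes $\sum_{v:f(v)<0}|f(v)|$. Together these give $\deg(g)\leq |E|+\sum_{v:f(v)<0}|f(v)|$, which is in fact even stronger than the bound asserted in the claim.

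I do not foresee any serious obstacle: once the firing schedule is fixed by the chosen vertex order, the entire argument reduces to a clean edge-counting telescoping. The only minor caveat is that Proposition~\ref{prop:recurrence_char} is stated under connectivity, but the implication I am invoking---namely that firing every vertex exactly once returns the divisor to itself and hence witnesses recurrence---holds regardless of connectivity, so nothing breaks if $G$ happens to have several components.
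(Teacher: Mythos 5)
Your proof is correct, and it is a sharper variant of the paper's argument rather than a literal match. Both proofs share the same skeleton---exhibit an effective $g$ for which $f+g$ admits a game where every vertex fires exactly once, hence is recurrent---but the witness $g$ differs. The paper adds $\max\{0,d_G(v)-f(v)\}$ chips to every vertex $v$, so that each vertex holds at least $d_G(v)$ chips and is active immediately; firing all vertices once in any order is then legal, and the added chips total at most $\sum_{v}d_G(v)+\sum_{v:f(v)<0}|f(v)|=2|E|+\sum_{v:f(v)<0}|f(v)|$. You instead fix the firing order in advance and top up each $v_i$ only by what it still lacks after crediting the chips it will receive from the earlier-fired $v_1,\dots,v_{i-1}$; the telescoping identity $\sum_{i}\sum_{j>i}d_G(v_i,v_j)=|E|$ then saves a factor of $2$ on the edge term and yields the strictly stronger bound $\disttorec{G}(f)\le |E|+\sum_{v:f(v)<0}|f(v)|$. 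Your treatment of the connectivity hypothesis in Proposition~\ref{prop:recurrence_char} is also sound: only the ``if'' direction is needed, and a game in which every vertex fires exactly once returns to the starting divisor regardless of connectivity (each vertex loses $d_G(v)$ chips and regains $\sum_{u\ne v}d_G(v,u)=d_G(v)$), so $f+g$ is recurrent by definition. The trade-off is that the paper's one-line choice already suffices for the claim as stated, while your ordering argument buys an improvement that has no further downstream consequence, since Claim~\ref{cl:M} is used only to certify that the explicit constant $M$ in Section~\ref{sec:red2} is large enough.
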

\begin{proof}
If we add $\max\{0,d_G(v)-f(v)\}$ chips to vertex $v$, then we have a chip configuration with at least $d_G(v)$ chips on each vertex $v$, which is certainly a recurrent configuration. Moreover, this addition needs at most the above stated number of chips.
\end{proof}

We define $G'$ as follows. Let $M$ be an integer large enough such that \[\disttorec{G}(f)+\max\left\{0,\max_{v\in V}\{f(v)-d_G(v)\}\right\} < M.\] 
By Claim~\ref{cl:M}, $M\coloneqq2\cdot|E|+\sum_{v\in V: f(v)<0}|f(v)|+\max\left\{0,\max_{v\in V}\{f(v)-d_G(v)\}\right\}+1$ is sufficient. Let $V'\coloneqq V \cup \{v_{new}\}$ and let $E'$ consist of $E$ together with $M$ parallel edges between $v$ and $v_{new}$ for each vertex $v\in V$. That is, $|E'|=|E|+M\cdot |V|$. Finally, we define $f'(v)\coloneqq f(v)+M$ for $v\in V$ and $f'(v_{new})\coloneqq 0$.

We claim that $\disttorec{G}(f)=\disttonh{G'}(f')$. We prove the two directions separately.

\begin{cl}\label{cl:disttonh_disttorec_1}
$\disttorec{G}(f)\geq\disttonh{G'}(f')$.
\end{cl}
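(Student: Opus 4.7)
The plan is to take an optimal solution of the \textsc{Dist-Rec} instance and lift it directly to a solution of the \textsc{Dist-Nonhalt} instance on $G'$, with the lift adding no chips on $v_{new}$.

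Concretely, let $g:V\to\mathbb{Z}_+$ be an effective divisor on $G$ with $\deg(g)=\disttorec{G}(f)$ such that $f+g$ is recurrent on $G$. By Proposition~\ref{prop:recurrence_char}, there is a chip-firing game starting from $f+g$ in which each vertex of $V$ fires exactly once; fix such a firing order $v_1,v_2,\ldots,v_n$ with $n=|V|$. Define $g':V'\to\mathbb{Z}$ by $g'(v)\coloneqq g(v)$ for $v\in V$ and $g'(v_{new})\coloneqq 0$, so that $g'$ is effective and $\deg(g')=\deg(g)=\disttorec{G}(f)$. By Proposition~\ref{prop:everyone_fired_implies_non-halting}, to conclude it suffices to exhibit a chip-firing game on $G'$ starting from $f'+g'$ in which every vertex of $V'$ fires at least once.

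The proposed game on $G'$ fires $v_1,v_2,\ldots,v_n$ in the same order as in $G$, and then fires $v_{new}$. To see that the firings are legal, observe the following invariant: at the moment just before $v_i$ fires in $G'$, the chip count on $v_i$ exceeds its chip count at the corresponding moment in $G$ by exactly $M$. This holds initially because $(f'+g')(v_i)-(f+g)(v_i)=M$, and it is preserved by firing $v_j$ (for $j<i$) because the edges between $v_j$ and $v_i$ in $G'$ are precisely the edges between them in $G$ — all of the $M$ new edges incident to $v_j$ go to $v_{new}$, not to $v_i$. Since $d_{G'}(v_i)=d_G(v_i)+M$ and $v_i$ is active at the corresponding moment in $G$, it is also active in $G'$. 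After $v_1,\ldots,v_n$ have all fired, $v_{new}$ has received $M$ chips from each vertex of $V$, giving it $0+M\cdot n=d_{G'}(v_{new})$ chips, so $v_{new}$ is also active.

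Since every vertex of $V'$ fires at least once in this game, Proposition~\ref{prop:everyone_fired_implies_non-halting} yields that $f'+g'$ is non-halting on $G'$, whence $\disttonh{G'}(f')\leq\deg(g')=\disttorec{G}(f)$. The argument is essentially bookkeeping, with the only conceptual point being the invariant in the previous paragraph; the reason $M$ was chosen large enough plays no role in this direction (it becomes relevant in the reverse inequality, where one needs to force $v_{new}$ and the $M$-edges to behave in a prescribed way).
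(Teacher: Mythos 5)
Your proof is correct and follows essentially the same route as the paper's: extend $g$ by zero on $v_{new}$, replay the firing sequence of the recurrent game on $G'$ using the observation that each original vertex carries exactly $M$ extra chips against an $M$-larger degree, fire $v_{new}$ last, and conclude via Proposition~\ref{prop:everyone_fired_implies_non-halting}. The only difference is cosmetic: you phrase the key step as an explicit invariant, while the paper states it as "firing a vertex modifies the configuration on $V$ the same way in $G$ and $G'$."
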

\begin{proof}
Let $g$ be an effective divisor of minimum degree such that there is a chip-firing game on $G$ starting from $f+g$ in which each vertex fires exactly once. We extend $g$ to $V'$ by setting $g(v_{new})=0$. We claim that if $\{v_1, \dots v_k\}$ is a sequence of firings starting from $f+g$ on $G$ in which each vertex occurs at most once, then these vertices can also fire in the same order starting from $f'+g$ on $G'$. Indeed, for each vertex $v\in V$, we have $d_{G'}(v)=d_G(v)+M$ and $f'(v)+g(v)=f(v)+g(v)+M$. Firing a vertex $v\in V$ modifies the chip-configuration on other vertices in $V$ the same way in $G$ and in $G'$. Hence if $v_i$ has not yet been fired and it has enough chips in the game on $G$, then it also has enough chips in the game on $G'$. This implies that there is a game on $G'$ starting from $f'+g$ in which each vertex of $V$ fires exactly once. At this point, $v_{new}$ received $M$ chips from each vertex in $V$, hence it has $M\cdot |V|$ chips. As its degree is also $M\cdot |V|$, at this point $v_{new}$ can fire, resulting in a game on $G'$ in which each vertex has fired exactly once.

By Proposition~\ref{prop:everyone_fired_implies_non-halting}, the above implies that $f'+g$ is non-halting on $G'$. Hence we have $\disttorec{G}(f)=\deg(g)\geq\disttonh{G'}(f')$, concluding the proof of the claim.
\end{proof}

\begin{cl}\label{cl:disttonh_disttorec_2}
$M>\disttonh{G'}(f')$.
\end{cl}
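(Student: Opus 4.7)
The plan is to derive this bound as a direct consequence of Claim~\ref{cl:disttonh_disttorec_1} together with the defining inequality for $M$. No new combinatorial argument is needed here; the claim is essentially a bookkeeping observation that was pre-arranged when $M$ was chosen, and its purpose is to guarantee that in the forthcoming converse direction (the inequality $\disttorec{G}(f)\leq \disttonh{G'}(f')$) one can safely assume that no chips are added to $v_{new}$ and that $f'(v)+g(v) < d_{G'}(v)$ for every $v\in V$ in any cheap witness $g$.

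Concretely, I would proceed in two short steps. First, observe that the term $\max\{0,\max_{v\in V}\{f(v)-d_G(v)\}\}$ appearing in the definition of $M$ is nonnegative, so that the choice of $M$ already forces the weaker inequality $\disttorec{G}(f) < M$. Second, invoke Claim~\ref{cl:disttonh_disttorec_1}, which established $\disttonh{G'}(f')\leq \disttorec{G}(f)$. Chaining these two inequalities yields
\[
\disttonh{G'}(f')\;\leq\;\disttorec{G}(f)\;<\;M,
\]
which is exactly the claim.

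There is no real obstacle: the only thing to double-check is that Claim~\ref{cl:disttonh_disttorec_1} is already available at this point in the text (it is, since it was proved immediately above), and that $M$ is indeed chosen so that the extra max-term does not cause trouble. The concrete value $M\coloneqq 2|E|+\sum_{v: f(v)<0}|f(v)|+\max\{0,\max_{v\in V}\{f(v)-d_G(v)\}\}+1$ gives a polynomially bounded integer by Claim~\ref{cl:M}, so the reduction stays polynomial, but this is not needed for the proof of the claim itself.
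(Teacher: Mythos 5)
Your argument matches the paper's proof exactly: the paper also notes that $M$ was chosen with $M>\disttorec{G}(f)$ and then chains this with Claim~\ref{cl:disttonh_disttorec_1}. Correct and same approach.
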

\begin{proof}
We choose $M$ such that $M>\disttorec{G}(f)$, hence the claim follows by Claim \ref{cl:disttonh_disttorec_1}.
\end{proof}

\begin{cl}\label{cl:disttonh_disttorec_3}
$\disttorec{G}(f)\leq\disttonh{G'}(f')$.
\end{cl}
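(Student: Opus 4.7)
The plan is to take an effective divisor $g'$ on $G'$ of minimum degree $\disttonh{G'}(f')$ such that $f' + g'$ is non-halting on $G'$, and to show that the divisor $g := g'|_V$ makes $f + g$ recurrent on $G$. Since $\deg(g) = \deg(g') - g'(v_{new}) \leq \deg(g')$, this will yield $\disttorec{G}(f) \leq \deg(g) \leq \disttonh{G'}(f')$, as required.

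To establish recurrence of $f + g$ on $G$, I would argue by contradiction: if $f + g$ were not recurrent, then by Proposition~\ref{prop:recurrence_char} together with Dhar's burning criterion, there would exist a nonempty $S \subseteq V$ with $(f + g)(v) < d_{G[S]}(v)$ for every $v \in S$, and the burning procedure would simultaneously produce a legal ordering $v_1, \ldots, v_k$ of $V \setminus S$ for which the chip-firing game on $G$ starting from $f + g$ fires each $v_i$ exactly once. I would then transfer this firing sequence to $G'$: since $(f' + g')(v) = (f + g)(v) + M$ and $d_{G'}(v) = d_G(v) + M$ for every $v \in V$, and since neither $v_{new}$ nor $v_i$ has been fired before step $i$, the chip count of $v_i$ on $G'$ right before step $i$ exceeds the one on $G$ by exactly $M$, so legality on $G$ transfers directly to legality on $G'$ when $v_{new}$ is never fired.

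The heart of the argument will be verifying that the resulting state on $G'$ is stable, which contradicts the non-halting hypothesis. For $v \in V \setminus S$, the chip count after firing is $(f+g)(v) - d_G(v,S)$; to show this is less than $d_{G'}(v) = d_G(v) + M$ it suffices to check $f(v) + g'(v) - d_G(v) < M$, which follows from Claim~\ref{cl:disttonh_disttorec_1} (giving $\deg(g') \leq \disttorec{G}(f)$) together with the definition of $M$, namely $M > \disttorec{G}(f) + \max\{0, \max_u(f(u) - d_G(u))\}$. For $v \in S$, the chip count is $(f+g)(v) + M + d_G(v, V\setminus S)$, and inactivity reduces exactly to the Dhar condition $(f+g)(v) < d_{G[S]}(v)$. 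For $v_{new}$, the chip count is $g'(v_{new}) + M|V\setminus S|$, which is less than $M|V| = d_{G'}(v_{new})$ because $g'(v_{new}) \leq \deg(g') < M \leq M|S|$ by Claim~\ref{cl:disttonh_disttorec_2} and $|S| \geq 1$.

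The main obstacle is precisely the stability check for $v \in V \setminus S$: the vertex must remain inactive after being fired despite the initial $+M$ addition of chips on $V$-vertices. This is the role of the term $\max\{0, \max_v(f(v) - d_G(v))\}$ in the definition of $M$, which together with Claim~\ref{cl:disttonh_disttorec_1} guarantees that $M$ dominates $f(v) - d_G(v) + g'(v)$ uniformly in $v$ and thereby closes the estimate.
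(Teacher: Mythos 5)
Your proposal is correct, but it runs in the opposite direction from the paper's argument. The paper works \emph{forward} from the non-halting hypothesis: it takes an arbitrary game on $G'$ starting from $f'+g'$, uses the choice of $M$ (together with $\deg(g')=\disttonh{G'}(f')\leq\disttorec{G}(f)$ from Claim~\ref{cl:disttonh_disttorec_1}) to show that no vertex of $V$ can fire twice before the first firing of $v_{new}$, deduces that every vertex of $V$ must fire exactly once before $v_{new}$ does (since $v_{new}$ needs $M\cdot|V|$ chips), and transfers that firing sequence to $G$ to certify recurrence via Proposition~\ref{prop:recurrence_char}. You instead argue \emph{contrapositively}: from non-recurrence of $f+g$ on $G$ you extract a Dhar-type obstruction set $S$ together with a maximal partial firing sequence of $V\setminus S$, lift that sequence to $G'$, and verify that the resulting configuration is stable, contradicting non-halting via the Bj\"orner--Lov\'asz--Shor confluence theorem quoted in the preliminaries. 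Your three stability computations (for $V\setminus S$, for $S$, and for $v_{new}$) are all correct, and the numeric input is the same as the paper's: the bound $f(v)+g'(v)-d_G(v)<M$ coming from the definition of $M$ and Claim~\ref{cl:disttonh_disttorec_1}. The one ingredient you should not leave implicit is the ``burning'' step: the paper never states Dhar's criterion, and the standard version is for sandpiles with a sink, so you should prove the fact you need directly --- take a maximal legal game from $f+g$ in which each vertex fires at most once; if it does not exhaust $V$ (which, by Proposition~\ref{prop:recurrence_char}, is forced by non-recurrence), then since unfired vertices only gain chips, each unfired $v$ satisfies $(f+g)(v)+d_G(v,V\setminus S)<d_G(v)$, which is exactly your condition $(f+g)(v)<d_{G[S]}(v)$. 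With that lemma spelled out, your route is a complete and somewhat more constructive alternative (it exhibits an explicit stable state reachable from $f'+g'$), at the cost of invoking the confluence theorem, which the paper's direct argument avoids.
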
 
\begin{proof}
Let $g'$ be an effective divisor such that $f'+g'$ is non-halting on $G'$ and $\deg(g')=\disttonh{G'}(f')$. We show that $g'(v_{new})=0$, and that for the divisor $g\coloneqq g'|_{V}$ obtained by restricting $g'$ to the set of original vertices, the divisor $f+g$ is recurrent on $G$. 

Observe that $f'(v_{new})=0$ and $v_{new}$ receives $M$ chips each time a vertex in $V$ fires. Moreover, $d_{G'}(v_{new})$ is divisible by $M$, hence adding less than $M$ chips on $v_{new}$ has no effect on whether it is able to fire or not. As $\disttonh{G'}(f')<M$, we conclude that $g'(v_{new})=0$.

Now take any game on $G'$ starting from $f'+g'$. We claim that no vertex $v\in V$ can fire twice before the first firing of $v_{new}$. Suppose to the contrary that some vertex fires twice before the first firing of $v_{new}$, and let $v\in V$ be the first such vertex. Then before its second firing, $v$ received at most $d_G(v)$ chips from its neighbors, and it lost $d_{G'}(v)$ chips upon its first firing. Since, by our assumption, $v$ is now capable of firing for the second time, we get that $f'(v)+g'(v)+d_G(v)-d_{G'}(v) \geq d_{G'}(v)$. As $d_{G'}(v)=d_G(v)+M$, this implies $f'(v)+g'(v) \geq d_G(v) + 2M$. As $f'(v)=f(v)+M$, the choice of $M$ implies $g'(v)\geq d_G(v)-f(v) + M> \disttorec{G'}(f')\geq\disttonh{G'}(f')=\deg(g')$, a contradiction.

Hence each vertex $v\in V$ fires at most once before the first firing of $v_{new}$. As $v_{new}$ has to collect $M\cdot |V|$ extra chips to be able to fire, each vertex $v\in V$ fires exactly once before the first firing of $v_{new}$. These firings would also be possible when starting from $f+g$ on $G$, hence $f+g$ is recurrent by Proposition \ref{prop:recurrence_char}.
\end{proof}

Note that the size of $G'$ might not be polynomial in the size of $G$ as the value of $M$ can be large. However, the above reduction will be applied to divisors $f$ with $0\leq f(v)\leq d_G(v)$ for $v\in V$, in which case the construction is polynomial. In particular, observe that $|V'|=|V|+1$.

\subsection{Lower bounds}

By combining the observations of Sections~\ref{sec:red1} and \ref{sec:red2} with Propositions~\ref{prop:feige} and \ref{prop:charikar}, we are ready to prove the main result of the paper.

\begin{thm}
\label{thm:main}
Unless $P=NP$, {\sc Dist-Nonhalt} cannot be approximated to within a factor of $O(2^{\log^{1-\varepsilon}n})$ for any $\varepsilon>0$. Assuming the Planted Dense Subgraph conjecture, {\sc Dist-Nonhalt} cannot be approximated to within a factor of $O(n^{1/4-\varepsilon})$ for any $\varepsilon>0$.
\end{thm}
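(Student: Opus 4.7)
The plan is to compose the reductions of Sections~\ref{sec:red1} and~\ref{sec:red2} into a single polynomial-time, value-preserving reduction from {\sc Min-TSS} to {\sc Dist-Nonhalt} on an instance of size $O(n^2)$, and then invoke Propositions~\ref{prop:feige} and~\ref{prop:charikar}.

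Starting with an instance $(G,\tau)$ of {\sc Min-TSS} with $|V(G)|=n$, I would apply Section~\ref{sec:red1} to obtain a {\sc Dist-Rec} instance $(G_1,x)$ on $O(n^2)$ vertices with $\ts_G(\tau)=\disttorec{G_1}(x)$. The key feature of this construction is the sandwich condition $0\leq x(v)\leq d_{G_1}(v)$ for every $v$, which is precisely the regime in which the remark at the end of Section~\ref{sec:red2} certifies that the subsequent reduction is polynomial: the parameter $M$ there collapses to $2|E(G_1)|+1$ because both the negative-chip term and the over-degree term vanish. Feeding $(G_1,x)$ into the Section~\ref{sec:red2} reduction then yields a {\sc Dist-Nonhalt} instance $(G_2,f')$ on $N\coloneqq|V(G_2)|=O(n^2)$ vertices with $\ts_G(\tau)=\disttonh{G_2}(f')$.

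Now suppose for contradiction that {\sc Dist-Nonhalt} admits a polynomial-time $\alpha(N)$-approximation; running it on $(G_2,f')$ and returning its output gives an $\alpha(N)$-approximation for {\sc Min-TSS} on $(G,\tau)$ by the exact identity above. For the first bound, $\alpha(N)=O(2^{\log^{1-\varepsilon}N})$ combined with $\log N\leq 2\log n+O(1)$ yields $\alpha(N)=O(2^{(\log n)^{1-\varepsilon/2}})$ for $n$ large, contradicting Proposition~\ref{prop:feige} applied with parameter $\varepsilon/2$. For the second bound, $\alpha(N)=O(N^{1/4-\varepsilon})$ becomes $O(n^{1/2-2\varepsilon})$, contradicting Proposition~\ref{prop:charikar} applied with parameter $2\varepsilon$; here the two reductions are deterministic, so composing with a probabilistic polynomial-time approximation still yields a probabilistic polynomial-time algorithm, as that proposition requires.

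The only nontrivial bookkeeping is verifying polynomial size of the composition (substituting into the closed form for $M$) and checking that the exponent loss from $N=\Theta(n^2)$ is absorbable into the $\varepsilon$-slack in each of Propositions~\ref{prop:feige} and~\ref{prop:charikar}; both are routine manipulations. I do not anticipate any substantive obstacle, since all of the serious technical work has already been carried out in the two reduction subsections.
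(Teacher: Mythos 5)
Your proposal is correct and follows essentially the same route as the paper: compose the two reductions (exploiting $0\leq x(v)\leq d_{G'}(v)$ to keep $M$, and hence the instance, polynomial), obtain an exact value-preserving reduction from {\sc Min-TSS} to {\sc Dist-Nonhalt} on $O(n^2)$ vertices, and invoke Propositions~\ref{prop:feige} and~\ref{prop:charikar}. Your bookkeeping of the $\varepsilon$-slack under the quadratic size blow-up (turning $N^{1/4-\varepsilon}$ into $n^{1/2-2\varepsilon}$, and absorbing the constant in $\log N\leq 2\log n+O(1)$) is in fact spelled out more explicitly than in the paper, which leaves that translation implicit; your choice $M=2|E(G_1)|+1$ differs from the paper's tighter $M=|V|+1$ but is equally valid and still polynomial.
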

\begin{proof}
Let $G=(V,E)$ be a graph and $\tau:V\to\mathbb{Z}_+$ be a threshold function. According to Section~\ref{sec:red1}, computing the value $\ts_G(\tau)$ of the $\tss$ instance on $G$ and $\tau$ can be reduced to computing $\disttorec{G'}(x)$ for a graph $G'=(V',E')$ having $|V'|=3\cdot |V|+2\cdot |E|$ vertices and $|E'|=2\cdot (|V|+3)\cdot |E|+|V|$ edges, and a divisor $x:V'\to\mathbb{Z}_+$ with $0\leq x(v)\leq d_{G'}(v)$ for each $v\in V'$.

As $\ts_G(\tau)\leq |V|$, we have $\disttorec{G'}(x)\leq |V|$. Hence we can use the reduction of Section \ref{sec:red2} with $M=|V|+1$. Thus, we have a graph $G''=(V'',E'')$ with $|V''|=3\cdot|V|+2\cdot|E|+1$ vertices and $|E''|=2\cdot (|V|+3)\cdot |E|+2\cdot |V|+|V|^2$ edges, and a divisor $x'':V''\to\mathbb{Z}_+$ such that $\ts_G(\tau) = \disttonh{G''}(x'')$.

Combining the above with Propositions \ref{prop:feige} and \ref{prop:charikar}, the statements of the theorem follow.
\end{proof}

\begin{rem}
The above reduction of {\sc Min-TSS} to {\sc Dist-Nonhalt} uses an auxiliary graph that contains parallel edges. Therefore, the stated complexity results hold for general (i.e. not necessarily simple) graphs. However, in \cite[Corollary 22]{hladky2013rank}, Hladk\'{y}, Kr\'{a}\v{l}, and Norine proved that if one subdivides each edge of a graph with a new vertex and places 0 chips on the new vertices, then the rank of the obtained divisor on the new graph is the same as the rank of the orignal divisor on the original graph. Using this result, our construction can be modified in such a way that eventually we get a {\sc Dist-Nonhalt} instance on a simple graph, at the expense of having a higher number of vertices. Such a reduction, assuming the Planted Dense Subgraph Conjecture, still implies a polynomial hardness to the problem.
\end{rem}

\begin{rem}
The proofs of the paper are based on a construction that resulted in a graph with large degrees. Deciding the complexity of computing the rank in degree-bounded graphs remains an interesting open problem.
\end{rem}
\medskip

\paragraph{Acknowledgement} 
Part of the work was done while the first and second authors attended the ``Discrete Optimization'' trimester program of the the Hausdorff Institute of Mathematics, Bonn. The authors are grateful to HIM for providing excellent working conditions and support. 

The third author was supported by the János Bolyai Research Scholarship of the Hungarian Academy of Sciences, and by the ÚNKP-21-5 New National Excellence Program of the Ministry of Innovation and Technology of Hungary. The research has been implemented with the support provided by the Lend\"ulet Programme of the Hungarian Academy of Sciences -- grant number LP2021-1/2021, by the Hungarian National Research, Development and Innovation Office -- NKFIH, grant numbers FK128673, PD132488, and by the Ministry of Innovation and Technology of Hungary from the National Research, Development and Innovation Fund, financed under the ELTE TKP 2021-NKTA-62 funding scheme.

\bibliographystyle{abbrv}
\bibliography{rank}

\end{document}